\newcommand{\qed}{\hfill \mbox{\raggedright \rule{.07in}{.1in}}}
\newenvironment{proof}{\vspace{1ex}\noindent{\bf Proof}\hspace{0.5em}} {\hfill\qed\vspace{1ex}}
\numberwithin{equation}{section} % add the section number to the equation label
\theoremstyle{plain}             % use ``default'' font
\newtheorem{thm}{Theorem}[section]
\newtheorem{lemma}[thm]{Lemma}
\newtheorem{prop}[thm]{Proposition}
\newtheorem{cor}[thm]{Corollary}
\theoremstyle{definition}    % use ``definition-style'' font for the rest
\newtheorem{defn}[thm]{Definition}
\newtheorem{examp}[thm]{Example}
\newtheorem{rmk}[thm]{Remark}
\newcommand{\C}{\mathds{C}}
\newcommand{\Z}{\mathds{Z}}
\newcommand{\Q}{\mathds{Q}}
\def\input{symbols} \clearpage{\input{symbols} \clearpage}
\def\addsymbol #1: #2#3{$#1$ \> \parbox{5in}{#2 \dotfill \pageref{#3}}\\}
\begin{document}

\title{A proof of the birationality of certain BHK-mirrors. }
\author{Patrick Clarke\\
Drexel University \\
Department of Mathematics}
\date{May 30, 2014}
\maketitle

\abstract{We generalize and give an elementary proof of Kelly's refinement \cite{kelly:2013} 
of Shoemaker's result \cite{shoemaker:2012} on the birationality of certain BHK-mirrors.  Our 
approach uses a construction that is equivalent to the
Krawitz generalization \cite{krawitz:2010} of the duality in Berglund-H\"ubsch \cite{berglund-hubsch:1992}.
 }

\section{Introduction}
We consider certain orbifold quotients of Calabi-Yau hypersurfaces in weighted 
projective spaces.  Such a Calabi-Yau has a mirror partner according to the 
Berglund-H\"ubsch mirror construction \cite{berglund-hubsch:1992}.  This construction was introduced as a 
generalization of Greene-Plesser construction of the mirrors of  Fermat-type 
hypersurfaces \cite{greene-plesser:1990}.  Forming the mirror partner to an orbifold quotient of such a hypersuface is best known as
Berglund-H\"ubsch-Krawitz mirror symmetry since a characterization of the group used to quotient the
mirror hypersurface  was given by Krawitz \cite{krawitz:2010}.

In \cite{shoemaker:2012}, Shoemaker proved that BHK-mirrors of distinct Calabi-Yau orbifolds 
are birational provided the original Calabi-Yau's lie in the same weighted projective space and the group used
to quotient them is the same.   More recently, Kelly 
was able to prove this result assuming only that the group was the same \cite{kelly:2013}; in particular the assumption that the hypersurfaces lie in the same
projective space was dropped.

The present paper proves the result of \cite{kelly:2013} (and a generalization: Theorem \ref{theorem:main}). 
The theorem is found to 
be an easy consequence of some observations about Fourier transforms and transposed matrices.
Our main tool is the duality introduced in 
\cite{clarke:2008}; this duality is a simple construction, and includes
the constructions of \cite{batyrev:1994,borisov:1993, givental:1996, hori-vafa:2000} as special cases.

BHK-mirror symmetry was shown in \cite{clarke:2012} to be a special case of the duality of  \cite{clarke:2008} as well. 
However, since \cite{clarke:2012} was never published,
we include an appendix proving this.

\section{Preliminaries}
Matrices multiply from the left. 
We make the notations
$$
W = \sum_{j=0}^m \prod_{i=0}^n X_i^{p_{ij}} \   \colon \ \C^{n+1} \to \C,
$$
$$
P = (p_{ij})_{ij} \  \colon \ \Z^{m+1} \to \Z^{n+1},
$$
and
$$
\begin{array}{rccc}
(+)  \colon & \C^{m+1} & \longrightarrow & \C \\
& (Z_0, \dotsc, Z_m) & \mapsto &  Z_0 + \dotsc + Z_m
\end{array} 
$$
We assume that the monomials in $W$ are distinct.
\begin{defn}
We consider abelian groups which are isomorphic to finite direct sums whose summands are algebraic tori and/or finitely generated.
For such an abelian group $H$ we have {\bf Fourier transform} and {\bf transposition functors}:
$$
F(H) = \operatorname{Hom}(H, \C^\times),
$$
$$
H^\tau = \operatorname{Hom}(H, \Z).
$$
Homomorphisms are assumed to be algebraic (i.e. $\operatorname{Hom}(\C^\times, \C^\times) = \Z$). 
\end{defn}

\begin{examp}
\label{example:notations}
$W = X_0^{p_{00}}X_1 + X_1^{p_{11}}X_2$ then 
$$
P = \left( \begin{array}{cc} p_{00}& 0\\ 1&p_{11} \\ 0 & 1  \end{array} \right).
$$
$F(\Z) = \C^\times$
and if we denote $Z_i \colon F(\Z^{2}) \to \C$ the map which sends $\chi \mapsto \chi(e_i),$ then 
$F(P)\colon (\C^\times)^3 _{\underline{X}} \to (\C^\times)^2_{\underline{Z}}$ is given by 
$$(Z_0, \ Z_1) =  ( X_0^{p_{00}}X_1, \  X_1^{p_{11}}X_2).$$
\end{examp}

\begin{prop}\  \\
\begin{itemize}
\item For finite $H,$ $F(H) \cong H$ (non-canonically),
\item $F$ is contravariant, and $F \circ F = \mathds{1},$ 
\item $F$ is exact when restricted to finitely generated groups or to affine groups, and it interchanges these two subcategories, 
\item$F(\Z^{\ell+1}) = (\C^\times)^{\ell+1}$ and so $F((\C^\times)^{\ell+1}) = \Z^{\ell+1},$ and
\item $W$ restricted to  $(\C^\times)^{n+1}$ is $(+) \circ F(P).$ 
\end{itemize}
\end{prop}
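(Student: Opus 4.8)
The plan is to dispatch the five bullets mostly by unwinding definitions: the first four amount to the standard duality (a form of Cartier duality) between finitely generated abelian groups and affine (diagonalizable) groups over $\C$, and the last is a direct computation. For the first bullet I would write a finite $H$ as a direct sum of cyclic groups; since $F=\operatorname{Hom}(-,\C^\times)$ commutes with finite direct sums it suffices to treat $H=\Z/d$, where $\operatorname{Hom}(\Z/d,\C^\times)$ is the group $\mu_d$ of $d$-th roots of unity, cyclic of order $d$, and a choice of primitive $d$-th root of unity yields the (non-canonical) isomorphism $\mu_d\cong\Z/d$. Contravariance of $F$ (second bullet) needs nothing, as $\operatorname{Hom}(-,\C^\times)$ is contravariant by construction; I will postpone $F\circ F=\mathds 1$ until exactness is available.

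For the third bullet, exactness follows from $\C^\times$ being a divisible, hence injective, abelian group, so that $\operatorname{Hom}(-,\C^\times)$ carries short exact sequences to short exact sequences. The point needing care is that this remains exact \emph{as a functor between the two stated categories}, i.e.\ that $F$ respects the algebraic structure; this I would check using that every finitely generated abelian group is an extension of a free group by a finite one (and dually every affine group is an extension involving a torus and a finite group), together with the computations $F(\Z)=\C^\times$, $F(\C^\times)=\Z$, and $F(\text{finite})$ finite — which also show that $F$ interchanges the subcategories of finitely generated and of affine groups. With exactness and these generators in hand, $F\circ F=\mathds 1$ reduces to verifying that the evaluation map $h\mapsto(\chi\mapsto\chi(h))$ is an isomorphism on $\Z$, on $\C^\times$, and on each $\Z/d$, which is immediate (on $\Z$ it is $n\mapsto(t\mapsto t^n)$, i.e.\ the identity of $\operatorname{Hom}(\C^\times,\C^\times)=\Z$). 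The fourth bullet is then immediate: $F(\Z^{\ell+1})=\operatorname{Hom}(\Z^{\ell+1},\C^\times)=(\C^\times)^{\ell+1}$ by evaluation on the standard basis, and $F((\C^\times)^{\ell+1})=\Z^{\ell+1}$ by applying $F\circ F=\mathds 1$.

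The last bullet is the only real computation, and it is Example \ref{example:notations} done in general. Since $P\colon\Z^{m+1}\to\Z^{n+1}$, contravariance gives $F(P)\colon(\C^\times)^{n+1}=F(\Z^{n+1})\to F(\Z^{m+1})=(\C^\times)^{m+1}$, $\chi\mapsto\chi\circ P$. Writing $X_i=\chi(e_i)$ for the coordinates on the source and $Z_j$ for the coordinate on the target evaluating a character on $e_j$, I compute
$$
Z_j\bigl(F(P)(\chi)\bigr)=(\chi\circ P)(e_j)=\chi\Bigl(\sum_{i=0}^n p_{ij}e_i\Bigr)=\prod_{i=0}^n X_i^{p_{ij}},
$$
so $F(P)(\underline X)=\bigl(\prod_i X_i^{p_{i0}},\dotsc,\prod_i X_i^{p_{im}}\bigr)$; composing with $(+)$ sums over $j$ to give $\sum_{j=0}^m\prod_{i=0}^n X_i^{p_{ij}}=W$ on $(\C^\times)^{n+1}$. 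I expect no serious obstacle. The one thing to watch is the bookkeeping forced by the left-multiplication convention for matrices — keeping straight which index labels rows versus columns, so that the exponents come out as $p_{ij}$ and not $p_{ji}$ — and, if one wants full rigor in the third bullet, making precise what ``exact between these categories'' means at the level of group schemes.
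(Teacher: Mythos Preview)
Your proposal is correct and is essentially a fleshed-out version of the paper's own proof, which simply says that the first four bullets are ``usual statements about Pontryagin duality with $S^1$ replaced with $\C^\times$'' and that the last bullet follows by elaborating on Example~\ref{example:notations}. You have supplied exactly those elaborations, and your computation of $Z_j(F(P)(\chi))=\prod_i X_i^{p_{ij}}$ is precisely what the paper has in mind.
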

\begin{proof}The first 4 bullets are usual statements about Pontryagin duality with $S^1$ replaced with $\C^\times,$ and are easily checked.  The last bullet is proved by elaborating on Example \ref{example:notations}. \end{proof}

\begin{figure}[h]
$$
\xymatrix{
1 \ar[r] & G \ar[r]&  (\C^\times)^{n+1} \ar[r] \ar[d]_{F(P)} & (\C^\times)^{n+1}/G \ar@/^/[ddl]^{W_G} \ar[r]  \ar@{-->}[dl]& 1 \\
&  	& (\C^\times)^{m+1} \ar[d]_{(+)} & & \\
& 	& \C & & 
}
$$
\caption{ $G \leq (\C^\times)^{n+1}$ is a closed subgroup such that 
$W$ descends to the quotient.}
\label{figure:descend}
\end{figure}

\newpage
\begin{prop}
If $W$ factors through a quotient $(\C^\times)^{n+1}/G$ of $(\C^\times)^{n+1}$  by a closed subgroup $G$, then  there is a unique homomorphism given by the dashed arrow in Figure \ref{figure:descend} which makes the diagram commute.
\end{prop}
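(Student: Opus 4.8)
The plan is to reduce everything to the single claim that $G \subseteq \ker F(P)$. Granting that, let $q \colon (\C^\times)^{n+1} \to (\C^\times)^{n+1}/G$ be the quotient map; since $G$ lies in the kernel of the homomorphism $F(P) \colon (\C^\times)^{n+1} \to (\C^\times)^{m+1}$, the universal property of the quotient gives a unique homomorphism $\phi \colon (\C^\times)^{n+1}/G \to (\C^\times)^{m+1}$ with $\phi \circ q = F(P)$, and this $\phi$ is the dashed arrow. Uniqueness among all homomorphisms making the diagram commute is immediate: any such homomorphism must satisfy $\phi \circ q = F(P)$, and $q$ is an epimorphism. The lower triangle commutes as well, because $(+) \circ \phi \circ q = (+) \circ F(P) = W|_{(\C^\times)^{n+1}} = W_G \circ q$ by the last bullet of the preceding Proposition together with the definition of $W_G$, and cancelling the epimorphism $q$ yields $(+) \circ \phi = W_G$.

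It remains to prove $G \subseteq \ker F(P)$. Fix $g \in G$ and set $\lambda_j = \big(F(P)(g)\big)_j \in \C^\times$ for $j = 0, \dots, m$. Because $F(P)$ is a homomorphism of tori, $F(P)(gx) = F(P)(g)\cdot F(P)(x)$ coordinatewise for every $x \in (\C^\times)^{n+1}$; composing with $(+)$ and using $W|_{(\C^\times)^{n+1}} = (+)\circ F(P)$ gives
$$
W(gx) \;=\; \sum_{j=0}^m \lambda_j \big(F(P)(x)\big)_j \;=\; \sum_{j=0}^m \lambda_j \prod_{i=0}^n X_i^{p_{ij}} .
$$
Since $W$ factors through $(\C^\times)^{n+1}/G$, we have $W(gx) = W(x)$ for all $x$, so the Laurent polynomial $\sum_{j=0}^m (\lambda_j - 1)\prod_{i=0}^n X_i^{p_{ij}}$ vanishes identically on $(\C^\times)^{n+1}$.

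The crux of the argument is the classical linear independence of characters: the monomials $\prod_i X_i^{p_{ij}}$ are pairwise distinct characters of the torus $(\C^\times)^{n+1}$ — here we use the standing assumption that the monomials of $W$ are distinct — hence they are $\C$-linearly independent as functions on the torus. Therefore $\lambda_j - 1 = 0$ for every $j$, i.e. $F(P)(g) = (1,\dots,1)$, so $g \in \ker F(P)$; as $g$ was arbitrary, $G \subseteq \ker F(P)$. I do not expect a genuine obstacle: the proof is a short bookkeeping argument whose only delicate point is invoking distinctness of the monomials in exactly the right place, for without it one would conclude only that $\sum_j(\lambda_j-1)$ vanishes over each coincidence class rather than that each $\lambda_j$ equals $1$. (Equivalently, applying the exactness of $F$ to the top row and using $F \circ F = \mathds{1}$, the claim $G \subseteq \ker F(P)$ says precisely that the columns of $P$ lie in the sublattice $F\big((\C^\times)^{n+1}/G\big) \subseteq \Z^{n+1}$, i.e. in the annihilator of $G$; I will phrase the write-up whichever way is cleaner.)
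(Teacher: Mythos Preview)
Your argument is correct and is essentially the same as the paper's: both reduce to the fact that if a sum of distinct characters of $(\C^\times)^{n+1}$ is $G$-invariant then each individual character is, which is exactly your linear-independence step. The paper packages this via the Reynolds operator $R_G$ (characters are eigenfunctions, $R_G$ projects onto invariants by erasing the non-invariant characters, and $W$ descends iff $R_G W = W$), whereas you compute $W(gx)=W(x)$ directly; the content is identical.
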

\begin{proof}
Denote by $R_G$ the Reynolds operator for the action of $G$ on the functions on $(\C^\times)^{n+1}.$
$R_G$ is a projection and characters are eigenfunctions for the action of $G$.  Characters are also a basis for the space of functions, so  $R_G$ evaluated on a function is given by expanding the function in characters and erasing those which are not invariant under $G$.  Furthermore, a function descends to $(\C^\times)^{n+1}/G$ 
if and only if it is sent to itself under $R_G.$  This means that the characters in the expansion of $W$ must be $G$-invariant, and thus the map exists.  Since we have assumed the monomials are distinct, it is unique. \end{proof}

\begin{cor}
\label{corollary:factors-factor}
Quotients of $\C^{n+1}$ though which  $W$ factors correspond to factorizations
$$
P = A  \circ B^\tau
$$
where $B^\tau \colon \Z^{m+1} \to M,$  $A \colon M \to \Z^{n+1},$ and $M = F((\C^\times)^{n+1}/G).$
Furthermore, the function to which $W$ descends is 
$$W_G = (+) \circ F(B^\tau) \colon (\C^\times)^{n+1}/G \to \C$$
\end{cor}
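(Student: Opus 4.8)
The plan is to apply the Fourier transform functor $F$ to the commutative diagram of Figure~\ref{figure:descend} and read the factorization off of it; once the correspondence is set up carefully, everything follows formally from $F\circ F=\mathds{1}$, the contravariance of $F$, and the exactness properties collected in the Propositions above. For the forward direction, start with a quotient of $\C^{n+1}$ through which $W$ factors. Since a closed subgroup $G\leq(\C^\times)^{n+1}$ acts on $\C^{n+1}$ through the standard torus action and $(\C^\times)^{n+1}$ is dense in $\C^{n+1}$, the polynomial $W$ factors through $\C^{n+1}/G$ if and only if $W|_{(\C^\times)^{n+1}}$ factors through $(\C^\times)^{n+1}/G$, so the preceding Proposition applies and supplies the dashed homomorphism $\delta\colon(\C^\times)^{n+1}/G\to(\C^\times)^{m+1}$ with $\delta\circ\pi=F(P)$ and $(+)\circ\delta=W_G$, where $\pi$ denotes the quotient map. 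Applying $F$ and using $F\circ F=\mathds{1}$ together with contravariance gives $P=F(\pi)\circ F(\delta)$; setting $M:=F((\C^\times)^{n+1}/G)$, $A:=F(\pi)\colon M\to\Z^{n+1}$, and $B^\tau:=F(\delta)\colon\Z^{m+1}\to M$ yields $P=A\circ B^\tau$. Since $F$ is exact and $\pi$ is surjective, $A=F(\pi)$ is injective, so $M$ embeds in $\Z^{n+1}$ and is in particular free; hence $M^{\tau}$ is defined, $M^{\tau\tau}\cong M$, and the name $B^\tau$ is justified by taking $B:=(F(\delta))^\tau\colon M^\tau\to\Z^{m+1}$. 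Finally $W_G=(+)\circ\delta=(+)\circ F(F(\delta))=(+)\circ F(B^\tau)$, the stated formula.

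Conversely, given a factorization $P=A\circ B^\tau$ with $M$ free and $A\colon M\to\Z^{n+1}$ injective, apply $F$: put $\pi:=F(A)\colon(\C^\times)^{n+1}\to F(M)$ and $\delta:=F(B^\tau)\colon F(M)\to(\C^\times)^{m+1}$, so that $\delta\circ\pi=F(P)$. Exactness of $F$ turns the injection $A$ into a surjection $\pi$, so $G:=\ker\pi$ is a closed subgroup, the first isomorphism theorem identifies $F(M)$ with $(\C^\times)^{n+1}/G$, and hence $M=F((\C^\times)^{n+1}/G)$. Since $W|_{(\C^\times)^{n+1}}=(+)\circ F(P)=\bigl((+)\circ\delta\bigr)\circ\pi$, the restriction of $W$ to the torus is pulled back along $\pi$, hence is $G$-invariant; therefore $W$ descends to $\C^{n+1}/G$ with $W_G=(+)\circ\delta=(+)\circ F(B^\tau)$. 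These two passages are mutually inverse up to the obvious equivalence of factorizations, namely replacing $A$ by $A\circ\iota^{-1}$ and $B^\tau$ by $\iota\circ B^\tau$ for an isomorphism $\iota$ of $M$.

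I expect the routine points --- that descent to the affine quotient is equivalent to descent to the torus quotient, and the bookkeeping with the variance of $F$ --- to be quick. The one place needing genuine care, and thus the main obstacle, is making the word ``correspond'' precise: one must decide exactly which factorizations are admitted, the natural choice being those with $A$ injective, equivalently those in which $M$ is realized as the full character group $F((\C^\times)^{n+1}/G)$ rather than a proper sublattice, and then check that two admissible factorizations determine the same subgroup $G$ precisely when they differ by an automorphism of $M$. With the correspondence pinned down this way, freeness of $M$ --- and hence the legitimacy of the transpose $B^\tau$ appearing in the statement --- is automatic from the embedding $M\hookrightarrow\Z^{n+1}$.
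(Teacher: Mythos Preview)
Your proposal is correct and follows exactly the paper's approach: apply $F$ to Figure~\ref{figure:descend} and set $A=F(\text{quotient map})$, $B^\tau=F(\text{dashed arrow})$. The paper's proof is a one-line sketch of precisely this; you have simply spelled out the converse direction, the formula for $W_G$, and the bookkeeping (freeness of $M$, the meaning of ``correspond'') that the paper leaves implicit.
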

\begin{proof}  Apply $F$ to the groups in figure \ref{figure:descend}.  $A = F(\text{quotient map})$ and $B^\tau = F(\text{dotted arrow}).$ \end{proof}

\begin{defn}
We call $([\C^{n+1}/G], W_G)$ a {\bf quotient Landau-Ginzburg model}.  We
assume that such an object includes the data of the presentation: $(\C^{n+1}, W, G).$ 
\end{defn}

\begin{defn}
Given a quotient Landau-Ginzburg model $([\C^{n+1}/G], W_G),$ we define the dual 
quotient Landau-Ginzburg model $([\C^{m+1}/G^T], W^T_{G^T})$ by the data
\begin{itemize}
\item $P^T = P^\tau$, and
\item $G^T = \operatorname{ker} F(B) \leq F((\Z^{m+1})^\tau).$
\end{itemize}
\end{defn}
This definition is illustrated in figure \ref{figure:dual-data}. 
\begin{figure}[h]
$$
\xymatrix{
F(G) & \ar[l]\Z^{n+1} & \ar[l]_{A}  M \\
& \ar[u]^P\Z^{m+1} \ar[ur]_{B^\tau}& 
} 
\qquad \qquad 
\xymatrix{
 F(G^T) & \ar[l](\Z^{m+1})^\tau & \ar[l]_{B}  M^\tau \\
 & \ar[u]^{P^\tau}(\Z^{n+1})^\tau \ar[ur]_{A^\tau}& \\
} 
$$
\caption{The data of a quotient Landau-Ginzburg model and its dual side-by-side.}
\label{figure:dual-data}
\end{figure}

\begin{rmk}
This is the special case of the duality in  \cite{clarke:2008} where the K\"ahler parameter is set to 0, or equivalently all the coefficients of the superpotential are equal to 1.
The point of view in loc. cit. is that the dual is obtained by interchanging the roles of the homomorphisms $A$ and $B$.
\end{rmk}

\section{The theorems of Shoemaker and Kelly}

\begin{prop}
\label{proposition:equal-sups}
If quotient Landau-Ginzburg models $([\C^{n+1}/G], W_G)$ and $([\C^{n+1}/G], W'_{G})$ are quotients by the same group $G$, then $W^T_{G^T}$ equals ${W'}^T_{{G}^{T}}$ on the torus $F(M^\tau).$ 
\end{prop}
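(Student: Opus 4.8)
The plan is to apply Corollary~\ref{corollary:factors-factor} to the two dual Landau--Ginzburg models and to observe that the resulting description of the dual superpotential on the torus $F(M^\tau)$ uses only data intrinsic to $G$.

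First I would transport Corollary~\ref{corollary:factors-factor} to the dual side. Reading off Figure~\ref{figure:dual-data}, the dual model $([\C^{m+1}/G^T], W^T_{G^T})$ has exponent matrix $P^\tau$, character lattice $M^\tau = F((\C^\times)^{m+1}/G^T)$, and factorization $P^\tau = B \circ A^\tau$, in which $A^\tau \colon (\Z^{n+1})^\tau \to M^\tau$ plays the role of ``$B^\tau$'' from the corollary. Hence Corollary~\ref{corollary:factors-factor}, applied to the dual, yields
$$
W^T_{G^T} \;=\; (+)\circ F(A^\tau)\ \colon\ F(M^\tau)\longrightarrow \C ,
$$
where $F(A^\tau)\colon F(M^\tau)\to F((\Z^{n+1})^\tau)=(\C^\times)^{n+1}$ and $(+)$ is the sum of the $n+1$ coordinates of $(\C^\times)^{n+1}$.

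The main point is then that every ingredient on the right-hand side depends only on $G$ (and on the number of variables $n+1$), not on the chosen superpotential. The lattice $M=F((\C^\times)^{n+1}/G)$ is manifestly so; and by the proof of Corollary~\ref{corollary:factors-factor} the map $A$ is $F$ applied to the quotient homomorphism $(\C^\times)^{n+1}\to(\C^\times)^{n+1}/G$, i.e.\ the inclusion of the sublattice of $G$-invariant characters of $(\C^\times)^{n+1}$, so it too is determined by $G$ alone. (The superpotential enters only through $B^\tau$, which is the unique solution of $A\circ B^\tau=P$ once $M$ and $A$ are fixed, as $A$ is injective.) Consequently $A^\tau$, the torus $F(M^\tau)$, and the homomorphism $F(A^\tau)$ depend only on $G$, while $(+)\colon(\C^\times)^{n+1}\to\C$ depends only on $n$. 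Since $([\C^{n+1}/G],W_G)$ and $([\C^{n+1}/G],W'_G)$ are built from the same $\C^{n+1}$ and the same subgroup $G$, they produce the same $M$, the same $A$, hence the same map $(+)\circ F(A^\tau)$; by the displayed formula this common map is the restriction of $W^T_{G^T}$ to $F(M^\tau)$ in one case and of ${W'}^T_{G^T}$ to $F(M^\tau)$ in the other, which is exactly the asserted equality.

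The computational content is slim; the only thing requiring genuine care is the bookkeeping of the Fourier transforms and transpositions when moving Corollary~\ref{corollary:factors-factor} to the dual side --- matching the corollary's ``$B^\tau$'' with the dual's $A^\tau$ and the corollary's ``$(\C^\times)^{n+1}/G$'' with $F(M^\tau)$ --- together with the easy but essential observation that the torus $F(M^\tau)$ on which the equality is claimed, being built from $M$, is itself a function of $G$.
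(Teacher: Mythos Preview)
Your argument is correct and follows the same route as the paper: apply Corollary~\ref{corollary:factors-factor} on the dual side to obtain $W^T_{G^T}=(+)\circ F(A^\tau)$ on $F(M^\tau)$, then observe that $A$ (being $F$ of the quotient map $(\C^\times)^{n+1}\to(\C^\times)^{n+1}/G$) depends only on $G$, hence $A=A'$ and the two dual superpotentials agree. The paper's proof is the two-line version of exactly this.
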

\begin{proof}  
Since $G$ is the same in both cases, $A = A'$ in the factorization of $P$ and $P'.$
The definition of the dual and corollary  \ref{corollary:factors-factor} imply  $W^T_{G^T} = {W'}^T_{{G}^T}= (+) \circ F(A^\tau).$
\end{proof}

\begin{defn}
Given a quotient Landau-Ginzburg model $([\C^{n+1}/G], W_G),$ {\bf a weight vector}  is a homomorphism $q \colon M \to \Z$
such that $q B^\tau (e_j -e_k) = 0$ for all $j,k$.
\end{defn}

\begin{defn}
Given a weight vector $q$, $W_G = 0$ defines a hypersurface $Z^q_{W, G}$ in the quotient of the quotient $[(\C^\times)^{n+1}/G/ \C^\times];$ where the action of $\C^\times$ is through the homomorphism  $F(q)(\C^\times)\to  (\C^\times)^{n+1}/G.$  $Z^q_{W, G}$ is called {\bf the associated non-linear sigma-model}. 
\end{defn}

\begin{rmk}
The associated non-linear sigma model  defined here is likely wrong for the purposes of mirror symmetry unless $W$ is a Calabi-Yau polynomial (defined below).  
A more likely candidate, when it exists, is given in Clarke \cite{clarke:2008}.
\end{rmk}

\begin{thm}
\label{theorem:main}
If quotient Landau-Ginzburg models $([\C^{n+1}/G], W_G)$ and $([\C^{n+1}/G], W'_{G})$  are quotients by the same group $G,$ then the weight vectors of 
$([\C^{m+1}/G^T], W_{G^T})$ and $([\C^{m'+1}/{G}^{T'}], W'_{{G}^T})$
 coincide, and $Z^q_{W^T, G^T}$ is birational to $Z^q_{W'^T, G^{T'}}$ for each  $q$.
\end{thm}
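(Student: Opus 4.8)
The plan is to reduce everything to the single clean fact established in Proposition \ref{proposition:equal-sups}: when two quotient Landau-Ginzburg models share the group $G$, their duals have the \emph{same} superpotential on a torus, namely $W^T_{G^T} = (+) \circ F(A^\tau)$ where $A$ is the common factor of $P$ (and $P'$) through $M$. The point is that the dual superpotential depends only on $A$, hence only on $G$, and not at all on the detailed monomial structure of $W$ versus $W'$. So the two sides of the claimed birational equivalence are built from literally the same function on the same torus; what differs is only the ambient compactification and the weight-vector data, and I must check those match up (or at least match up after a toric modification) in a way compatible with the $\C^\times$-quotient defining the sigma-model.

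First I would spell out how a weight vector for the dual model is encoded. By definition a weight vector for $([\C^{m+1}/G^T], W^T_{G^T})$ is a homomorphism $M^\tau \to \Z$, i.e. an element of $M$, killing the differences $A^\tau(e_j - e_k)$ of the columns of $A^\tau$ — equivalently, an element of $M$ pairing equally with all rows of $A$. Since $A$ is the same for both models, this condition is the same condition in both cases, so the sets of weight vectors coincide; this is the first assertion of the theorem and it is essentially immediate from Proposition \ref{proposition:equal-sups} together with unwinding the definition. (A minor subtlety: one should note $M$ itself may differ from $M'$ if $G$ is presented differently, but the canonical identification $M = F((\C^\times)^{n+1}/G)$ makes it the same lattice, and $A$ is then canonically the dual of the quotient map, so there is genuinely one lattice and one map in play.)

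Next, for a fixed weight vector $q$, I would identify $Z^q_{W^T,G^T}$ concretely. We have $W^T_{G^T} = (+)\circ F(A^\tau)$ on the torus $F(M^\tau)$, and the sigma-model is the zero locus of this inside $[F(M^\tau)/G^T/\C^\times]$, where $\C^\times$ acts through $F(q)$. Because $W^T_{G^T}$, the torus $F(M^\tau)$, the group $G^T$, and the action of $F(q)$ are \emph{all} determined by $(A, q)$ alone — $G^T = \ker F(B)$ does involve $B$, but $B$ only through $M^\tau \to (\Z^{m+1})^\tau$, and here is where the two models can genuinely differ, since $B \ne B'$ in general — I need to handle the discrepancy coming from $G^T \ne G^{T'}$. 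The resolution is that $F(M^\tau)/G^T = F(\ker(B^\tau))$ sits inside the torus $F(M^\tau)$; quotienting $F(M^\tau)$ by $G^T$ and $G^{T'}$ gives two different sub-tori, but after the further quotient by $F(q)(\C^\times)$ and intersecting with $W^T_{G^T} = 0$, one checks the resulting stacks are birational by writing down an explicit birational map on the level of the tori — it is induced by a linear map of the character lattices carrying one sublattice to the other modulo the weight direction, which exists because both $B^\tau$ and $B'^\tau$ are splittings complementary to the same quotient data. So the core computation is: produce a lattice isomorphism (or at least a finite-index/rational equivalence) between the relevant quotient lattices that commutes with the projection to $M/\langle q\rangle$ and respects the superpotential, then dualize via $F$ to get the birational map of sigma-models.

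The main obstacle I expect is precisely this last matching of the $G^T$-quotients: Proposition \ref{proposition:equal-sups} handles the superpotential effortlessly, but the compactified hypersurface $Z^q$ lives in a quotient that depends on $B$, and $B$ differs between the two models even though $A$ does not. I anticipate the right move is to realize both $F(M^\tau)/G^T$ and $F(M^\tau)/G^{T'}$ as open tori inside a common toric stack — or to exhibit the birational map directly as a monomial change of variables solving the linear system relating the two factorizations $P = A B^\tau = A B'^\tau$ — and to verify that this monomial map is equivariant for the $F(q)$-actions (which it is, since $q$ factors through $M$, on which both $B^\tau$ and $B'^\tau$ act as the identity when post-composed with $A$ appropriately) and carries $W^T_{G^T} = 0$ to $W^{T'}_{G^{T'}} = 0$ (which it does, since both cut out $(+)\circ F(A^\tau) = 0$). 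Once that monomial map is in hand, birationality of the quotient stacks follows because a torus-equivariant birational monomial map descends to a birational map of quotients, and the theorem is proved.
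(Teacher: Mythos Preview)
Your handling of the weight vectors is fine and matches the paper: since $A=A'$, the condition $qA^\tau(e_j-e_k)=0$ is literally the same on both sides.

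The gap is in the second half. You write that the sigma-model lives in $[F(M^\tau)/G^T/\C^\times]$ and then spend the rest of the argument worrying that $G^T\neq G^{T'}$. But $G^T$ is a subgroup of $(\C^\times)^{m+1}$, not of $F(M^\tau)$; by Corollary~\ref{corollary:factors-factor} applied to the dual factorization $P^\tau=B\circ A^\tau$, the quotient $(\C^\times)^{m+1}/G^T$ \emph{is} $F(M^\tau)$. Likewise $(\C^\times)^{m'+1}/G^{T'}=F(M^\tau)$. So both dual sigma-models sit inside the \emph{same} stack $[F(M^\tau)/\C^\times]$, with the $\C^\times$ acting through the same $F(q)$, and by Proposition~\ref{proposition:equal-sups} they are cut out by the same function $(+)\circ F(A^\tau)$. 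They are therefore equal on the nose, not merely birational via some monomial map you have yet to construct.

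In other words, the ``main obstacle'' you anticipate --- matching the $G^T$- and $G^{T'}$-quotients --- does not exist. The dependence on $B$ versus $B'$ disappears exactly at the moment you pass to the quotient torus, because that quotient is canonically $F(M^\tau)$ regardless of which $B$ presents it. The entire apparatus of lattice isomorphisms, finite-index matchings, and equivariant monomial changes of variables in your last two paragraphs is solving a non-problem; the paper's proof is accordingly one sentence. Drop the extra quotient by $G^T$, recognize that everything in sight depends only on $(M,A,q)$, and you are done.
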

\begin{proof}  As before, $A = A'.$  
 The set of weight vectors for the duals are those $q$ which satisfy $qA^\tau(e_j-e_k) = 0$ for all $j,k$, and we know that 
$W_{G^T} = W'_{{G}^{T'}}$ on $F(M^\tau)$ by Proposition \ref{proposition:equal-sups}.
\end{proof}

\begin{defn}
$W$ is called {\bf Calabi-Yau} if  $P$ is square and invertible over $\Q,$
$$(1, \dots, 1) \operatorname{ag}(P) > 0 \text{ or } (1, \dots, 1) \operatorname{ag}(P) < 0$$
and
$$(1, \dots, 1) \operatorname{ag}(P)(1, \dotsc ,1)^\tau= \det(P).$$
Where $\operatorname{ag}(P)$ is the adjugate matrix of $P$.
\end{defn}

We recover the the main theorems of \cite{shoemaker:2012} and \cite{kelly:2013} here.
\begin{cor} 
Consider a group $G \leq (\C^\times)^{n+1}$ and
Calabi-Yau polynomials $W$ and $W'$ which are invariant under the action of $G$.  
Then weight vectors of the duals coincide and form cyclic subgroup of $\operatorname{Hom}(M^\tau, \mathds{Z})$, and 
$Z^{q_0}_{W^T, G^T}$ and $Z^{q_0}_{W'^T, G^{T'}}$ are birational for a  generator $q_0.$
\end{cor}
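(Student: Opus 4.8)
The plan is to deduce the Corollary from Theorem \ref{theorem:main} by verifying that, under the Calabi-Yau hypothesis, the set of weight vectors is precisely a cyclic subgroup of $\operatorname{Hom}(M^\tau, \Z)$ with an identifiable generator, and then invoking the birationality already established in the Theorem. First I would unwind the setup: since $W$ and $W'$ are invariant under the same $G \leq (\C^\times)^{n+1}$, Corollary \ref{corollary:factors-factor} gives factorizations $P = A \circ B^\tau$ and $P' = A' \circ (B')^\tau$ with the \emph{same} $A \colon M \to \Z^{n+1}$ (this is the observation already used in the proofs of Proposition \ref{proposition:equal-sups} and Theorem \ref{theorem:main}). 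So it suffices to work on the dual side with the fixed map $A^\tau \colon (\Z^{n+1})^\tau \to M^\tau$, and to show that $\{\,q \in \operatorname{Hom}(M^\tau,\Z) : q A^\tau(e_j - e_k) = 0 \ \forall j,k\,\}$ is infinite cyclic with a distinguished generator $q_0$; the coincidence of weight vectors for the two duals and the birationality of $Z^{q_0}_{W^T,G^T}$ and $Z^{q_0}_{W'^T,G^{T'}}$ are then immediate from Theorem \ref{theorem:main}.

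The heart of the argument is the cyclicity, and this is where the Calabi-Yau hypothesis enters. Because $W$ is Calabi-Yau, $P$ is square ($m = n$) and invertible over $\Q$, so $A$ and $B^\tau$ are both injective with finite cokernel and $M$ has rank $n+1$; dually $A^\tau$ is surjective up to finite index. The weight condition $q A^\tau(e_j - e_k) = 0$ for all $j,k$ says that $q \circ A^\tau$ is constant on the standard basis of $(\Z^{n+1})^\tau$, i.e. $q \circ A^\tau$ is a multiple of the "sum of coordinates" functional $(1,\dots,1)$ on $(\Z^{n+1})^\tau$. I would then compute: a candidate weight vector is obtained by noting that $(1,\dots,1) \, \mathrm{ag}(P)$, read through the factorization, descends to a functional $q_0$ on $M^\tau$ with $q_0 A^\tau = \det(P)\cdot(1,\dots,1)$ (this is exactly the content of the third displayed equation in the Calabi-Yau definition, $(1,\dots,1)\mathrm{ag}(P)(1,\dots,1)^\tau = \det(P)$, together with $P\,\mathrm{ag}(P) = \det(P) I$). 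The sign condition $(1,\dots,1)\mathrm{ag}(P) > 0$ (or $< 0$) guarantees $q_0 \neq 0$ and pins down a canonical choice up to overall sign. Finally, since $A^\tau$ has finite cokernel, any two solutions $q, q'$ of the weight equation agree after rescaling — the space of $\Q$-solutions is one-dimensional because it is cut out inside $\operatorname{Hom}(M^\tau,\Q) \cong \Q^{n+1}$ by the $n$ independent conditions $q A^\tau(e_j - e_{j+1}) = 0$ — so the integral solutions form a rank-one subgroup, hence cyclic, and I would take $q_0$ to be a generator (possibly a primitive multiple of the explicit $(1,\dots,1)\mathrm{ag}(P)$-vector above).

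The main obstacle I anticipate is bookkeeping the two descents cleanly: one must check that $(1,\dots,1)\mathrm{ag}(P)$, a functional on $\Z^{n+1}$, genuinely factors through $A \colon M \to \Z^{n+1}$ to give a well-defined element of $\operatorname{Hom}(M^\tau,\Z) = (M^\tau)^\tau$, and that this factored functional is the same for $W$ and $W'$ — but the latter is automatic since it depends only on $A = A'$, not on $B$, and the former follows because the Calabi-Yau relations force $(1,\dots,1)\mathrm{ag}(P)$ to be proportional to something that manifestly comes from $M$ (namely, $\mathrm{ag}(P) = \det(P) P^{-1}$ and $P = A B^\tau$ give $\mathrm{ag}(P) = \det(P)(B^\tau)^{-1} A^{-1}$ over $\Q$, so $(1,\dots,1)\mathrm{ag}(P)$ is $\det(P)$ times the image of a $\Q$-functional on $M$ under $A^{-1}$, which is integral by the $\det(P)$ factor and the relation in the definition). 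Modulo that linear algebra, birationality is not re-proved here at all — it is quoted verbatim from Theorem \ref{theorem:main} applied to $q = q_0$ — so the Corollary is, as the paper says, an easy consequence.
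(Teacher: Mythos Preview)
Your proposal is correct and follows essentially the same approach as the paper: observe $A=A'$, use invertibility of $P$ over $\Q$ (the Calabi-Yau hypothesis) to conclude that the $n$ conditions $qA^\tau(e_0-e_j)=0$ are linearly independent in the rank-$(n+1)$ group $\operatorname{Hom}(M^\tau,\Z)$, hence the solution set is infinite cyclic, and then quote Theorem~\ref{theorem:main}. The paper's proof is two lines and does not attempt to exhibit $q_0$ explicitly; your adjugate computation is extra (and, as you note, requires some care to make the descent to $M^\tau$ precise), but it is not needed for the argument---the existence of a generator is automatic once you know the subgroup is rank one.
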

\begin{proof} Again $A = A'$, 
so the set of weight vectors is cut out by the $n$-equations $qA^\tau(e_0 -e_j) = 0$ for $j = 1, \dotsc, n$.  
These equations are linearly independent because $A$ is invertible, so  these equations are linearly independent and
the weight vectors form a group isomorphic to $\mathds{Z}$.  Now  apply
the above results.
\end{proof}

\newpage
\appendix

\begin{center}{\large APPENDIX}\end{center}

\section{BHK  as Duality for toric LG models}

Berglund-H\"ubsch  \cite{berglund-hubsch:1992} introduced a mirror construction for certain hypersurfaces
in weighted projective spaces given by Calabi-Yau polynomials \cite{berglund-hubsch:1992}.  This was generalized by Krawitz \cite{krawitz:2010} where, to a group $G \leq (\mathds{C}^\times)^N$ 
and Calabi-Yau polynomial $W,$
a dual group
$G^\dagger$ is introduced and  
$$
([(\C^\times)^{n+1}/G],W_G) \text{ and } ([(\C^\times)^{m+1}/G^\dagger],W^T_{G^\dagger}) 
$$
are deemed
dual.
The construction of \cite{berglund-hubsch:1992} occurs when $G = \{1\}.$
We show that when the BHK-dual is defined (i.e. $W$ is Calabi-Yau), it is the same as the one above.

\subsection{Krawitz Duality.}

The notation in this paper is a departure from that of Krawitz \cite{krawitz:2010}, so before proving the equivalence of the constructions, we point out some of the main objects under consideration.

In our construction, given a subgroup $H$ of a torus $T,$ we applied the functor $F$ to obtain an abelian group $F(H)$.
Since $H \leq T,$ we can associate another abelian group to $H;$  namely, those elements of the Lie algebra $\frak{t}$
which map to $H$  under the exponential map $\frak{t} \to T.$ 
 We will denote this subgroup by $\Lambda_H.$  
 
 \begin{defn}
 The assignment $H \mapsto \Lambda_H$ defines a covariant functor $\Lambda_{- }$ from the category 
 of subgroups of tori 
to the category of  abelian groups.
 \end{defn}
 
 To put ourselves in a position that is amenable to the conventions of  \cite{krawitz:2010}, in the case $T = (\mathds{C}^\times)^N$ we denote 
$\Lambda_T$ by $\mathds{C}^N, $ the exponential map will  be $\exp(2\pi i -),$ and so $\Lambda_{\{ 1\}} = \mathds{Z}^N.$

 In \cite{krawitz:2010}   the notation $A_W$ is used for our  $P^\tau,$ the matrix $P$ square and invertible, and $N$ is written for our $n+1$ and $m+1.$
Additionally:
\begin{itemize}
\item $\rho_1, \dotsc, \rho_N =$ the columns of $(P^\tau)^{-1},$ 
\item $\overline{\rho}_1, \dotsc, \overline{\rho}_N =$ the rows of $(P^\tau)^{-1},$ 
\item  $\rho_k$ is also  denote $\exp(2\pi i \rho_k) \in (\mathds{C}^\times)^{N},$ where $\exp(2\pi i -)$ is applied entry-wise to   $\rho_k,$ and
\item similarly,   $\overline{\rho}_k$ is also used to denote $\exp(2\pi i \overline{\rho}_k) \in (\overline{\mathds{C}}^\times)^{N}$
where we have put the $\overline{\ }$ to distinguish between the  $N$-tori.
\end{itemize}
For clarity, we will 
write $g_k = \exp(2\pi i \rho_k)$ and $\overline{g}_k = \exp(2\pi i \overline{\rho}_k)  \in (\mathds{C}^\times)^{N}$  rather than overloading the notations $\rho_k$
and $\overline{\rho}_k.$

Krawitz expresses a subgroup $G$ of $(\mathds{C}^\times)^{N}$ 
under whose action $W$ is invariant in terms of the elements  $\{ g_1, \dotsc, g_N \}.$
This subgroup is specified by which exponents $[a_1, \dotsc, a_N] \in \mathds{Z}^N$ to produce an element $g_1^{a_1} \dotsm g_N^{a_N} \in G.$  Observe that this set of $a$-vectors is exactly 
$\Lambda_G.$  

The group $\langle g_1, \dotsc, g_N \rangle$ was denoted $G_{\bf max}$ in \cite{krawitz:2010}, and (tautologically) we see that  the $\mathds{Z}^N$ in which the $a$-vectors live is 
$\Lambda_{G_{\bf max}}.$  The other  $\mathds{Z}^N$ is $\Lambda_{\{1\}},$
and the vectors $[a_1, \dotsc, a_N]$ which have 
$g_1^{a_1} \cdots  g_N^{a_N} = 1 \in (\mathds{C}^\times)^N$
can be thought of as the columns of $P^\tau$
(because $(P^\tau)^{-1}P^\tau =$ the identity matrix).
This can be packaged into the inclusions
$$
\mathds{Z}^N \cong \Lambda_{\{1\}} \hookrightarrow \Lambda_G \hookrightarrow \Lambda_{G_{\bf max}}\cong \mathds{Z}^N
$$
whose  composition is given by multiplication on the left by  $P^\tau.$

The dual group $G^\dagger$ is defined to be the elements $\overline{g}_1^{r_1} \dotsm \overline{g}_N^{r_N} \in (\overline{\mathds{C}}^\times)^N$ satisfying the condition (18) of \cite{krawitz:2010}:
$$
\left[ \begin{array}{c} r_1, \dotsc, r_N \end{array} \right] (P^\tau)^{-1} \left[ \begin{array}{c} a_1 \\ \vdots \\ a_N \end{array}\right] \in \mathds{Z}
$$
for all $a_1, \cdots, a_N$ such that $g_1^{a_1} \cdots  g_N^{a_N} \in G$.
This means  $\left[ \begin{array}{c} r_1 , \dotsc,  r_N \end{array}\right]$ is in the $\mathds{Z}$-dual space to 
$\Lambda_G$ when this dual space is thought of as a subspace of $\Lambda_{\{1\}}^\tau.$  So if we write 
$\overline{G}_{\bf max}$ for the group $\langle \overline{g}_1, \dotsc, \overline{g}_N  \rangle,$ and 
$\overline{1}$ for the identity element in $(\overline{\mathds{C}}^\times)^N$ we have identifications
$$
\begin{array}{ccccc}
 \Lambda_{G_{\bf max}}^\tau &  \hookrightarrow & \Lambda_G^\tau & \hookrightarrow &   \Lambda_{\{1\}}^\tau \\
 \| &&\|&&\| \\
  \Lambda_{{\{ \overline{1}\}}} &  \hookrightarrow & \Lambda_{G^\dagger} & \hookrightarrow &   \Lambda_{\overline{G}_{\bf max}}.
  \end{array}
$$
and the composition is given by $P.$

\subsection{$G^\dagger = G^T.$}

The basic observation relating the two constructions is the precise relationship between 
the functors $F$, $-^\tau,$ and $\Lambda.$

\begin{lemma}
\label{lemma:F-transpose-exp-kernel}
Considered as functors on algebraic tori, there is a natural isomorphism
$$\lambda \colon F( - )^\tau \Rightarrow \Lambda_{\{1 \in (-) \}}.$$
\end{lemma}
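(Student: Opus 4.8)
The plan is to unwind both functors explicitly on an algebraic torus $T = (\C^\times)^N$ and exhibit the isomorphism as a familiar pairing, then argue naturality. First I would recall that $F(T) = \operatorname{Hom}(T, \C^\times)$ is the character lattice $X^*(T) \cong \Z^N$, so $F(T)^\tau = \operatorname{Hom}(F(T), \Z)$ is the cocharacter lattice $X_*(T)$. On the other side, $\Lambda_{\{1 \in T\}}$ consists of those elements $v$ of the Lie algebra $\mathfrak{t}$ with $\exp(2\pi i\, v) = 1 \in T$; this is precisely the kernel lattice of the exponential map, which is canonically the cocharacter lattice as well — a cocharacter $\mu \colon \C^\times \to T$ corresponds to $\tfrac{d}{dz}\big|_{z=0}\mu(e^{2\pi i z}) \in \mathfrak t$, and the integer points of $\mathfrak t$ under this identification are exactly those mapping to $1$. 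So the map $\lambda_T$ is: given $\phi \in F(T)^\tau$, i.e. a homomorphism $X^*(T) \to \Z$, produce the unique cocharacter $\mu_\phi$ with $\langle \chi, \mu_\phi\rangle = \phi(\chi)$ for all characters $\chi$, and send it to $\tfrac{d}{dz}\mu_\phi(e^{2\pi i z})|_0$. Concretely, for $T = (\C^\times)^N$ with its standard coordinates this is just the identity matrix $\Z^N \to \Z^N$; the content is that both sides are canonically $X_*(T)$.

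Next I would verify that $\lambda_T$ is an isomorphism of abelian groups for each $T$: both source and target are free of rank $N = \dim T$, and the pairing between characters and cocharacters is perfect, so $\phi \mapsto \mu_\phi$ is a bijection, and the exponential-derivative map is an isomorphism onto the kernel lattice. Then I would check naturality: given an algebraic homomorphism $f \colon T \to T'$, one needs the square relating $\lambda_T$ and $\lambda_{T'}$ to commute. Here $F(f) \colon F(T') \to F(T)$ is precomposition with $f$ on characters, $F(f)^\tau \colon F(T)^\tau \to F(T')^\tau$ is the transpose, and $\Lambda_{\{1\}}(f) \colon \Lambda_{\{1\in T\}} \to \Lambda_{\{1\in T'\}}$ is the restriction of $df \colon \mathfrak t \to \mathfrak t'$ (which indeed sends the kernel lattice of $\exp_T$ into that of $\exp_{T'}$). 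Commutativity then amounts to the identity $\langle \chi', f_*\mu \rangle = \langle f^*\chi', \mu\rangle$ for $\chi' \in X^*(T')$, $\mu \in X_*(T)$ — the defining adjunction of the character/cocharacter pairing — together with compatibility of the derivative-of-exponential identification with $df$, which holds since $f \circ \exp_T = \exp_{T'} \circ\, df$ for a homomorphism of tori.

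The step I expect to require the most care is the naturality verification, specifically keeping the two normalizations straight: the $\exp(2\pi i -)$ convention fixed in the paper versus the bare exponential implicit in the character pairing, and making sure the factor of $2\pi i$ cancels consistently so that $\lambda$ really is defined over $\Z$ and natural on the nose rather than up to a scalar. Everything else — that $F(T)^\tau$ and $\Lambda_{\{1\}}$ are the cocharacter lattice, that the pairing is perfect, that $df$ preserves the integral structure — is routine once the conventions are pinned down. I would organize the proof as: (i) identify both functors with $X_*(-)$; (ii) define $\lambda$ via the perfect pairing and note it is an isomorphism; (iii) chase the naturality square using $f^* \dashv f_*$ and $f\circ\exp = \exp\circ\, df$.
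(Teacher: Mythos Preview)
Your proposal is correct and follows essentially the same idea as the paper: both identify $F(T)^\tau$ and $\Lambda_{\{1\in T\}}$ with the cocharacter lattice of $T$, the paper by first building the isomorphism $\Lambda_{\{1\}}^\tau \cong F(T)$ from the tensor description $T = \Lambda_{\{1\}}\otimes_{\Z}(\C/2\pi i\Z)$ and then transposing, while you phrase the same map via the character/cocharacter pairing. The paper's version is terser and omits the explicit naturality check you outline, but the content is the same.
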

\begin{proof}
Consider an algebraic torus $T.$  $\Lambda_{\{1\}}$ is the kernel of its exponential map.  In particular, we can write $T = \Lambda_{\{1\}} \otimes_\Z (\C/2 \pi i \Z).$ Given $\phi  \colon \Lambda_{\{1\}} \to \Z$ considered as an element of $\Lambda_{\{1\}}^\tau,$ there is  a homomorphism $\chi \colon T \to \C^\times$ by the rule $(k \otimes \overline{z}) \mapsto \phi(k) \cdot \overline{z} \in \C/2 \pi i \Z \cong \C^\times.$  This defines a  isomorphism  $\Lambda_{\{1\}}^\tau \to F(T).$  Transposing this gives us $\lambda  \colon F(-)^\tau \Rightarrow \Lambda_{\{1 \in (-)\}}.$
\end{proof}

\begin{cor}
\label{cor:quotient-map}
Given a finite group $G \leq T$ of a torus, there is natural surjection
$$
q \colon T \to F(\Lambda_{G}^\tau)
$$
with kernel $G.$
\end{cor}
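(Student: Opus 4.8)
The plan is to build the map $q$ directly from the natural isomorphism $\lambda$ of Lemma~\ref{lemma:F-transpose-exp-kernel} together with the behavior of $F$ and $\Lambda_{-}$ on short exact sequences. First I would write down the inclusion $G \hookrightarrow T$; applying $F$ (which is exact and contravariant) gives a surjection $F(T) \twoheadrightarrow F(G)$, and applying $F$ to that gives back $F(F(T)) = T \twoheadleftarrow$ … wait — the cleaner route is to start from the inclusion $G\hookrightarrow T$, form $F$ to get $F(T)\twoheadrightarrow F(G)$, let $M$ be its kernel so that $1 \to M \to F(T) \to F(G) \to 1$ is exact with $M = F(T/?)$; then apply the transposition functor $-^{\tau}$, noting $F(T)$ is finitely generated so $-^{\tau}$ is exact on it. Since $F(G)$ is finite, $F(G)^{\tau} = 0$, so $F(T)^{\tau} \xrightarrow{\ \sim\ } M^{\tau}$; dualizing the whole sequence the relevant output is $\Lambda_G^{\tau}$, which by Lemma~\ref{lemma:F-transpose-exp-kernel} (applied functorially to the inclusion) is naturally identified with $F(\,\cdot\,)^{\tau}$ data. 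Then $q$ is obtained by applying $F$ once more and composing with $\lambda^{-1}$.

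The key steps, in order, are: (1) observe that $\Lambda_{-}$ applied to $G \hookrightarrow T$ gives an inclusion $\Lambda_G \hookrightarrow \Lambda_T = \Lambda_{\{1\in T\}}$ with finite cokernel (indeed cokernel $\cong G$ since $\Lambda_T/\Lambda_G$ injects into $T$ with image $G$); (2) dualize this inclusion over $\Z$ to get $\Lambda_T^{\tau} \hookrightarrow \Lambda_G^{\tau}$, again with finite cokernel, and apply $F$ to the resulting short exact sequence $0 \to \Lambda_T^{\tau} \to \Lambda_G^{\tau} \to (\text{finite}) \to 0$; (3) use exactness of $F$ on finitely generated groups, $F$ of the finite cokernel being (non-canonically) that finite group, to conclude $F(\Lambda_G^{\tau}) \twoheadrightarrow F(\Lambda_T^{\tau})$ has finite kernel; (4) identify $F(\Lambda_T^{\tau})$ with $T$ via $\lambda$ of Lemma~\ref{lemma:F-transpose-exp-kernel} — here $\lambda\colon F(T)^{\tau} \Rightarrow \Lambda_{\{1\in T\}}$, so $F(\Lambda_T^{\tau}) = F(F(T)^{\tau})$, and since $F(T)$ is finitely generated free for a torus, $F(F(T)^{\tau}) = F(T)^{\tau\tau} \otimes \C^{\times}$-style identification returns $F(T) $ hmm — rather, use $F\circ F = \mathds{1}$ and $-^{\tau\tau} = \mathds{1}$ on f.g. free groups to get $F(\Lambda_T^{\tau}) \cong T$ canonically; (5) compose to get $q\colon T \cong F(\Lambda_T^{\tau}) \mapsfrom$ … i.e. invert and read off $q\colon T \to F(\Lambda_G^{\tau})$, whose kernel is the dual of the finite cokernel from step (2), which one checks equals $G$.

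The main obstacle I expect is step (5): verifying that the kernel of $q$ is exactly $G$ rather than merely a finite group of the right order, and doing so canonically. This requires chasing through the double dualization carefully — the finite cokernel of $\Lambda_G^{\tau}\hookleftarrow\Lambda_T^{\tau}$ must be shown to be $F(G)$ compatibly, so that applying $F$ returns $G$ on the nose; the subtlety is that $F$ on a finite group is only non-canonically the identity, so one should instead track the short exact sequence $1 \to G \to T \to T/G \to 1$, apply $\Lambda_{-}$ and $-^{\tau}$ and $F$ in sequence, and use naturality of $\lambda$ to pin down the kernel. A secondary nuisance is bookkeeping which of the several "dual" groups is finitely generated versus affine at each stage so that the exactness hypotheses in the Proposition on $F$ apply; since $G$ is finite, $\Lambda_G$ has the same rank as $\Lambda_T$ and all the relevant $\Lambda$'s are finitely generated free, which keeps everything in the exact range.
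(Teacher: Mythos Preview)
Your step (1) has the inclusion backwards. With the paper's conventions, $\Lambda_H \subset \frak{t}$ is the \emph{preimage} of $H\leq T$ under the exponential map, so $\Lambda_{\{1\}} \hookrightarrow \Lambda_G$ (not the reverse) and $\Lambda_G/\Lambda_{\{1\}} \cong G$; the identification ``$\Lambda_T = \Lambda_{\{1\in T\}}$'' is simply false, since $\Lambda_T$ is the preimage of all of $T$, namely the whole Lie algebra $\C^N$. This error propagates: your steps (2)--(4) then produce a surjection $F(\Lambda_G^\tau) \twoheadrightarrow T$ rather than one $T\twoheadrightarrow F(\Lambda_G^\tau)$, and in step (5) you attempt to ``invert'' a surjection with nontrivial kernel, which is not a legitimate move. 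If you correct the direction of the inclusion in (1), the chain does yield a map the right way, but you still face the problem you flagged: after $\Z$-dualizing $0\to\Lambda_{\{1\}}\to\Lambda_G\to G\to 0$, the cokernel of $\Lambda_G^\tau\hookrightarrow\Lambda_{\{1\}}^\tau$ is $\operatorname{Ext}^1_{\Z}(G,\Z)$, and identifying this canonically with $F(G)$ (so that one more application of $F$ returns $G$ on the nose) requires invoking the exponential sequence, not the merely non-canonical isomorphism $F(H)\cong H$ for finite $H$.

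The paper's argument sidesteps all of this. It applies $\Lambda_{-}$ not to the inclusion $G\hookrightarrow T$ but to the quotient map $T \to T/G$: since $G$ is finite the Lie algebras coincide, so this induces an isomorphism $\Lambda_{G\leq T} \xrightarrow{\ \sim\ } \Lambda_{\{1\in T/G\}}$. Now Lemma~\ref{lemma:F-transpose-exp-kernel} applied to the torus $T/G$ reads $F(T/G)^\tau \cong \Lambda_G$; transposing and applying $F$ gives $T/G \cong F(\Lambda_G^\tau)$, and $q$ is just the quotient map $T\to T/G$ followed by this isomorphism. The kernel is visibly $G$ and naturality is inherited from $\lambda$, with no Ext bookkeeping required.
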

\begin{proof}
Consider the quotient map $T \to T/G.$  Since $G$ is finite, this induces an isomorphism $\Lambda_{G\leq T} \to \Lambda_{1 \in T/G}.$  Now transpose and apply $F$ to Lemma \ref{lemma:F-transpose-exp-kernel}.
\end{proof}

\begin{thm}
Consider a morphism $W \colon \mathds{C}^N \to \mathds{C}$  and a group $G \leq (\mathds{C}^\times)^N$ for which $W$ is invariant.  Assume the matrix $P$ is invertible, then the Krawitz dual $G^\dagger$
equals $G^T.$
\end{thm}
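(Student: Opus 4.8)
The plan is to unwind both constructions through the functor dictionary established in the appendix and check that the two subgroups of $(\overline{\mathbb{C}}^\times)^N$ literally coincide. Recall that $G^T$ was defined as $\operatorname{ker} F(B) \leq F((\mathbb{Z}^{m+1})^\tau)$, where $P = A \circ B^\tau$ is the factorization from Corollary \ref{corollary:factors-factor}; in the square-invertible case we are free to take $M = \mathbb{Z}^N$, $B^\tau = P^\tau$ and $A = \mathbb{1}$ — though the cleanest route is actually to take the factorization through $\Lambda_G$, i.e.\ $B^\tau\colon \mathbb{Z}^N \cong \Lambda_{\{1\}} \hookrightarrow \Lambda_G$ and $A\colon \Lambda_G \hookrightarrow \Lambda_{G_{\mathbf{max}}} \cong \mathbb{Z}^N$, which is exactly the inclusion chain displayed before the appendix's first subsection, with composite $P^\tau$. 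So first I would pin down precisely which $M$, $A$, $B$ make $([\mathbb{C}^{N}/G], W_G)$ the quotient LG model realizing the $G$-action, using Corollary \ref{corollary:factors-factor} and Corollary \ref{cor:quotient-map} (the latter identifies $(\mathbb{C}^\times)^N/G$ with $F(\Lambda_G^\tau)$, so $M = F((\mathbb{C}^\times)^N/G) = \Lambda_G^\tau$).

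Next I would compute $G^\dagger$ on Krawitz's side in the same language. The appendix already did the bulk of this: condition (18) of \cite{krawitz:2010} says the exponent vector $[r_1,\dots,r_N]$, viewed in $\Lambda_{\{1\}}^\tau$, must pair integrally with every vector of $\Lambda_G$, i.e.\ $G^\dagger$ corresponds to the saturated dual $\Lambda_G^\tau \subseteq \Lambda_{\{1\}}^\tau$, sitting inside $\Lambda_{\overline{G}_{\mathbf{max}}}$ via the identification $\Lambda_{\{1\}}^\tau = \Lambda_{\overline{G}_{\mathbf{max}}}$. Under the exponential map $\exp(2\pi i -)\colon \Lambda_{\overline{G}_{\mathbf{max}}} \otimes (\mathbb{C}/2\pi i \mathbb{Z}) \to (\overline{\mathbb{C}}^\times)^N$, the subgroup cut out is the image of $\Lambda_G^\tau/\Lambda_{\{\overline 1\}}$, which is exactly $F$ applied to $(\Lambda_G^\tau)^\tau$ modulo its saturation — more to the point, by Lemma \ref{lemma:F-transpose-exp-kernel} and Corollary \ref{cor:quotient-map} this image is $\operatorname{ker}\bigl(F((\mathbb{Z}^{m+1})^\tau) \to F(\Lambda_{\{\overline 1\}})\bigr)$, since $F$ of the inclusion $\Lambda_{\{\overline 1\}} \hookrightarrow \Lambda_G^\tau$ is precisely the map $F(B)$ after transposing (note $B\colon M^\tau \to (\mathbb{Z}^{m+1})^\tau$ is the transpose of the second inclusion $A\colon \Lambda_G \hookrightarrow \Lambda_{G_{\mathbf{max}}}$). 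So both $G^T$ and $G^\dagger$ are the kernel of the same homomorphism $F(B)$, and I would assemble this into a commuting diagram matching the two dual-data triangles of Figure \ref{figure:dual-data}.

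The two routine-but-load-bearing verifications I would spell out are: (i) that the quotient map $A\colon \Lambda_G \to \Lambda_{G_{\mathbf{max}}}$ really is $F$ of the quotient $(\mathbb{C}^\times)^N \to (\mathbb{C}^\times)^N/G$ — this uses Corollary \ref{cor:quotient-map} together with the observation $\Lambda_{G_{\mathbf{max}}} = \Lambda_{\{1\}}$ (the ambient lattice) intersected appropriately — and (ii) that $W_G = (+) \circ F(B^\tau)$ from Corollary \ref{corollary:factors-factor} matches Krawitz's descended superpotential $W_G$, which is immediate since $F(P^\tau) = F(P)$ restricted is $W$ on the torus by the last bullet of the first Proposition and $B^\tau$ composed with the quotient recovers the same characters. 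The main obstacle is purely bookkeeping: keeping straight which of the two copies of $\mathbb{Z}^N$ (namely $\Lambda_{\{1\}}$ versus $\Lambda_{G_{\mathbf{max}}}$, and their transposes on the dual side) each arrow lands in, so that the identification $\operatorname{ker} F(B) = G^\dagger$ is an equality of subgroups of a single explicitly named torus $(\overline{\mathbb{C}}^\times)^N$ rather than merely an abstract isomorphism. Once the naturality square from Lemma \ref{lemma:F-transpose-exp-kernel} is drawn with the correct objects at the corners, the equality $G^\dagger = G^T$ falls out by chasing it.
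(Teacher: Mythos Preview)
Your strategy is exactly the paper's: identify $M^\tau \cong \Lambda_G$ via Corollary~\ref{cor:quotient-map}, match the factorization of $P^\tau$ to the inclusion chain $\Lambda_{\{1\}} \hookrightarrow \Lambda_G \hookrightarrow \Lambda_{G_{\mathbf{max}}}$ from the appendix, and then read off $G^\dagger = \ker F(B) = G^T$ from the dual-side exact sequence. The paper does this by applying $F$ to $1 \to G^\dagger \to (\overline{\mathds{C}}^\times)^N \to F(\Lambda_G) \to 1$ and checking that the resulting map $\Lambda_G \to \mathds{Z}^N$ coincides with $B$, which it reduces (via invertibility of $P$) to checking that $A^\tau$ is the inclusion $\Lambda_{\{1\}} \hookrightarrow \Lambda_G$ --- precisely your verification (i).

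However, you have committed the very bookkeeping slip you warned against. The composite of the chain $\Lambda_{\{1\}} \hookrightarrow \Lambda_G \hookrightarrow \Lambda_{G_{\mathbf{max}}}$ is $P^\tau$, and from Figure~\ref{figure:dual-data} it is $P^\tau = B \circ A^\tau$ that lives on the dual side, not $A \circ B^\tau$ (which must equal $P$). Hence $A^\tau$ is the \emph{first} inclusion $\Lambda_{\{1\}} \hookrightarrow \Lambda_G$ and $B$ is the \emph{second} $\Lambda_G \hookrightarrow \Lambda_{G_{\mathbf{max}}}$, the reverse of what you wrote; correspondingly $M \cong \Lambda_G^\tau$ while $M^\tau \cong \Lambda_G$, and your sentence ``$B$ is the transpose of the second inclusion $A$'' is tangled. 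Your parenthetical first option ($A = \mathds{1}$, $B^\tau = P^\tau$) is separately wrong on two counts: the factorization in Corollary~\ref{corollary:factors-factor} is \emph{determined} by $G$ (taking $A = \mathds{1}$ forces $G = \{1\}$), and in any case $A \circ B^\tau$ must equal $P$, not $P^\tau$. Once the labels are untangled, your outline and the paper's proof coincide.
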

\begin{proof}
From Corollary \ref{cor:quotient-map}, we have an exact sequence
$$
1 \to G^\dagger \to (\overline{\mathds{C}}^\times)^N \to F(\Lambda_G) \to 1
$$ 
Applying $F$ yields
$$
\Lambda_G \to \mathds{Z}^N \to F(G^\dagger).
$$
Thus if we can verify that the map $F(q) \colon \Lambda_G \to \mathds{Z}^N$
equals the map $B \colon M^\tau \to (\mathds{Z}^{m+1})^\tau$ from figure \ref{figure:dual-data}
we are done. Since both $F(q)$ and $B$ factor the invertible matrix $P^\tau,$
if suffices to check that $A^\tau \colon (\mathds{Z}^{n+1})^\tau \to M^\tau$
equals the map $\mathds{Z}^N \to \Lambda_{G}.$  By definition, $A^\tau$ is obtained by
applying $F$ to the map 
$$
(\mathds{C}^\times)^N \to (\mathds{C}^\times)^N/G.
$$
and then  transposing.
On the other hand,  Corollary   \ref{cor:quotient-map}
guarantees that $\mathds{Z}^N = \Lambda_{\{1\}} \to \Lambda_{G}$ is also obtained this way. 
\end{proof}

\subsection*{Acknowledgements} 
The author is grateful for the encouragement of Y. Ruan
and for conversations with M. Krawitz, T. Kelly, and M. Shoemaker.

\newpage

\bibliographystyle{habbrv}
\bibliography{BHK-birat}

\end{document}